\newcommand{\sysn}{\left\{\begin{array}{rcl}}
\newcommand{\sysk}{\end{array}\right.}
\newtheorem{theorem}{Theorem}[section]
\newtheorem{lemma}[theorem]{Lemma}
\theoremstyle{example}
\newtheorem{proposition}[theorem]{Proposition}
\theoremstyle{definition}
\newtheorem{definition}[theorem]{Definition}
\newtheorem{corollary}[theorem]{Corollary}
\journal{...}
\begin{document}

\title{On Baire property of spaces of compact-valued
measurable functions}

\author{Alexander V. Osipov}

\address{Krasovskii Institute of Mathematics and Mechanics, \\ Ural Federal
 University, Yekaterinburg, Russia}

\ead{OAB@list.ru}

\begin{abstract} A topological space $X$ is {\it Baire} if the Baire
Category Theorem holds for $X$, i.e., the intersection of any
sequence of open dense subsets of $X$ is dense in $X$. One of the
interesting problems in the theory of functional spaces is the
characterization of the Baire property of a functional space through
the topological property of the support of functions.

In the paper this problem is solved for the space $M(X, K)$
of all measurable compact-valued ($K$-valued) functions defined on a measurable space $(X,\Sigma)$ with the topology of pointwise convergence. It is proved that
$M(X, K)$ is Baire for any metrizable compact space $K$.
\end{abstract}

\begin{keyword}  Baire property \sep function space \sep Baire function

\MSC[2010] 54C35 \sep 54E52 \sep 46A03 \sep 22A05

\end{keyword}

\maketitle 


\section{Introduction}

 A space is {\it meager} (or {\it of the first Baire category}) if it
can be written as a countable union of closed sets with empty
interior. A topological space $X$ is {\it Baire} if the Baire
Category Theorem holds for $X$, i.e., the intersection of any
sequence of open dense subsets of $X$ is dense in $X$. Clearly, if
$X$ is Baire, then $X$ is not meager. The reverse implication is
in general not true. However, it holds for every homogeneous space
$X$ (see Theorem 2.3 in \protect\cite{LM}).

The Baire property is an important topological property used not only in topology but also in other areas of mathematics, such as category theory, functional analysis, measure theory and others, and so it is natural to explore the question: {\it When does a function space have (or not have) the Baire property?}

The Baire property for the space of continuous mappings was first considered in \protect\cite{LM} and \protect\cite{Vid}.
In \protect\cite{LM}, necessary and, in some cases, sufficient conditions on the space $X$ were obtained under which the space $C_p(X,\mathbb{R})$ of all continuous real-valued functions defined on the space $X$ with the topology $p$ of pointwise convergence, has the Baire property.

In general, characterizing the Baire property for function spaces is not a simple task. For example, the problem for the space $C_p(X,\mathbb{R})$ remained unsolved for a long time. Later, the problem for $C_p(X,\mathbb{R})$ was solved independently by E.G.~Pytkeev in \protect\cite{pyt1}, V.~Tkachuk in
\protect\cite{tk} and van Dau in \protect\cite{vD}. For the space
$C_p(X,[0,1])$, the Baire property was studied in \cite{OsPy1}.  For the space $B_1(X,\mathbb{R})$ of Baire-one functions the problem was solved in \cite{Osip1}.

\section{Main definitions and notation}

The set of positive integers is denoted by $\mathbb{N}$ and
$\omega=\mathbb{N}\cup \{0\}$. Let $\mathbb{R}$ be the real line,
we put $\mathbb{I}=[0,1]\subset \mathbb{R}$, and let $\mathbb{Q}$
be the rational numbers.

  Let $V=\{f\in \{0,1\}^X: f(x_i)\in p_i,
i=1,...,n\}$ where $x_i\in X$, $p_i\in \{0,1\}$ for $i=1,...,n$.
Denote by $supp V:=\{x_1,...,x_n\}$.

The basic open neighborhood of a function $f\in\{0,1\}^X $ with support $A$ will be the set $O(f,A):=\{h\in\{0,1\}^X: h\upharpoonright A=f\upharpoonright
A\}$, where $A$ is a finite subset of the space $X$. Note that $supp O(f,A)=A$.

\medskip

Recall that $(X,\Sigma)$ is a {\it  measurable space}, if $X$ is a
set equipped with $\sigma$-algebra $\Sigma$. In mathematical analysis and in probability theory, a {\it $\sigma$-algebra} on a set $X$ is a nonempty collection $\Sigma$ of subsets of $X$ closed under complement, countable unions, and countable intersections.

\medskip
\begin{definition} Let $(X,\Sigma)$ be a measurable space and $A,B\subset X$. Then $A$ and $B$ are {\it $\Sigma$-separated}
if there is $C\in \Sigma$ such that $A\subseteq C$ and $B\subseteq
X\setminus C$.
\end{definition}

\medskip
A function $f: X\rightarrow Y$ is said to be {\it measurable} if
$f^{-1}(V)\in \Sigma$ for every open subset $V$ of $Y$.

A function $f: X\rightarrow \{0,1\}$ is said to be {\it indicator
measurable} if for every $i\in \{0,1\}$ the pre-image
$f^{-1}(i)\in \Sigma$.

\medskip

Denote by ${\bf 2}$ the doubleton, i.e., ${\bf 2}=\{0,1\}$ endowed
with the discrete topology. Denote by $M(X,{\bf 2})$ the set of
all indicator measurable functions with the pointwise convergence
topology. If $Y$ is a topological space then denote by $M(X,Y)$
the set of all measurable functions with the pointwise convergence
topology.

\medskip
Note that the following proposition holds.

\begin{proposition} Let $(X,\Sigma)$ be a measurable space. Then  $M(X,{\bf 2})$ is dense
in ${\bf 2}^X$ if and only if for any points $x,y\in X$ ($x\neq
y$) the sets $\{x\}$ and $\{y\}$ are $\Sigma$-separated.
\end{proposition}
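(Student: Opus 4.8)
The plan is to reduce the topological density statement to a purely combinatorial separation problem about $\Sigma$. First I would record the basic observation that, since $\Sigma$ is closed under complement, a function $g\colon X\to\mathbf 2$ is indicator measurable precisely when $C:=g^{-1}(1)\in\Sigma$; thus $M(X,\mathbf 2)$ is exactly the set of indicator functions $\{\mathbf 1_C : C\in\Sigma\}$. Density of $M(X,\mathbf 2)$ in $\mathbf 2^X$ then means that every basic neighborhood $O(f,A)$ (with $A=\{x_1,\dots,x_n\}$ a finite set of distinct points and $f$ prescribing values $p_i=f(x_i)$) contains some $\mathbf 1_C$; equivalently, for every such finite pattern there is a $C\in\Sigma$ with $x_i\in C \iff p_i=1$.

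For the forward implication I would simply specialize this to two points. Assuming density, given $x\neq y$ take $A=\{x,y\}$ and target values $f(x)=1$, $f(y)=0$; density produces a $g\in M(X,\mathbf 2)\cap O(f,A)$, and the set $C:=g^{-1}(1)\in\Sigma$ satisfies $x\in C$ and $y\notin C$, which is exactly a $\Sigma$-separation of $\{x\}$ and $\{y\}$.

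The substance is the converse, and here the main step is to assemble a single measurable set realizing an arbitrary finite $0/1$ pattern out of the given pairwise separations. Write $I=\{i:p_i=1\}$ and $J=\{j:p_j=0\}$. For each pair $i\in I$, $j\in J$ the points $x_i,x_j$ are distinct, so by hypothesis there is $C_{ij}\in\Sigma$ with $x_i\in C_{ij}$ and $x_j\notin C_{ij}$. I would first freeze $i\in I$ and form $D_i:=\bigcap_{j\in J}C_{ij}$, a finite intersection and hence a member of $\Sigma$, which contains $x_i$ yet excludes every $x_j$ with $j\in J$; then set $C:=\bigcup_{i\in I}D_i\in\Sigma$. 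By construction $x_i\in C$ for each $i\in I$, while no $x_j$ with $j\in J$ lies in any $D_i$ and hence none lies in $C$. Thus $\mathbf 1_C\in M(X,\mathbf 2)\cap O(f,A)$, giving density; the degenerate cases $I=\varnothing$ and $J=\varnothing$ are covered by $C=\varnothing$ and $C=X$.

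The only point requiring any care is the bookkeeping in this two-stage construction — checking that the finitely many Boolean operations keep us inside $\Sigma$ and that the intersection-then-union correctly sorts the prescribed points. Since a $\sigma$-algebra is closed under finite unions, finite intersections and complements, no genuine obstacle arises, and it is worth stressing that the argument never invokes any countable operation: finite separation data suffice precisely because the neighborhoods $O(f,A)$ constrain only finitely many coordinates.
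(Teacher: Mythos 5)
Your proof is correct. Note that the paper itself states this proposition without proof (``Note that the following proposition holds''), so there is no written argument to compare against; your reduction --- identifying $M(X,{\bf 2})$ with $\{\mathbf{1}_C : C\in\Sigma\}$ via closure under complements, getting the forward direction by testing density on the two-point neighborhood, and getting the converse by the intersect-then-union construction $C=\bigcup_{i\in I}\bigcap_{j\in J}C_{ij}$ --- is exactly the natural argument the author leaves implicit. In fact your two-stage construction is the finite analogue of the technique the paper does spell out in step (1) of the proof of Theorem 3.1, where the sets $C(x)=\bigcap_y C_y$ and $C=\bigcup_x C(x)$ are assembled in the same way (there with countable operations, since countably many points must be handled at once); your observation that only finite Boolean operations are needed for density is accurate, since basic neighborhoods constrain finitely many coordinates.
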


{\it Further, we assume that $\sigma$-algebra $\Sigma$ has the
property that for any points $x,y\in X$ ($x\neq y$) the sets
$\{x\}$ and $\{y\}$ are $\Sigma$-separated.}

\medskip

For other notation almost without exceptions we follow the
Engelking's book \cite{Eng}.

\medskip

\begin{definition} We say that a measurable space $(X,\Sigma)$ has the
$M$-property if every disjunct sequence $\{\Delta_{n} : n\in
\mathbb{N}\}$ of finite subsets $\Delta_n\subset X$,
$\Delta_n=A_n\cup B_n$, $A_n\cap B_n=\emptyset$, $n\in
\mathbb{N}$,
 contains subsequence $\{\Delta_{n_k} : k\in
\mathbb{N}\}$ such that the sets $\bigcup\limits_{k=1}^{\infty}
A_{n_k}$ and $\bigcup\limits_{k=1}^{\infty} B_{n_k}$ are
$\Sigma$-separated.

\end{definition}

The following two lemmas were proved for the space $B(X,{\bf
2})$ of Baire indicator functions in \cite{Osip}. The proofs of
these lemmas for the space $M(X,{\bf 2})$ are repeated by formally
replacing the space $B(X,{\bf 2})$ with the space $M(X,{\bf 2})$.
For the convenience of the reader, we present these proofs.

\begin{lemma}\label{24}
Let $(X,\Sigma)$ be a measurable space. Then the following
statements are equivalent:

\begin{enumerate}

\item  $X$ has the $M$-property;

\item for any disjunct sequence $\{\Delta_{n} : n\in \mathbb{N}\}$
of finite subsets $\Delta_n\subset X$ and a function $f:
\bigcup\limits_{n=1}^{\infty}\Delta_n \rightarrow \{0,1\}$ there
is a subsequence $\{\Delta_{n_k} : k\in \mathbb{N}\}$ and
$\widetilde{f}\in M(X,{\bf 2})$ such that
$\widetilde{f}\upharpoonright \bigcup\limits_{k=1}^{\infty}
\Delta_{n_k}=f\upharpoonright \bigcup\limits_{k=1}^{\infty}
\Delta_{n_k}$.

\end{enumerate}

\end{lemma}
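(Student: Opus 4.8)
The plan is to prove the two implications by translating between indicator measurable functions and members of $\Sigma$, using the single observation that a function $g: X \to \{0,1\}$ lies in $M(X,{\bf 2})$ precisely when $g^{-1}(1) \in \Sigma$, i.e. when $g$ is the characteristic function of a set in $\Sigma$. Under this dictionary, ``$\widetilde f$ extends $f$'' and ``the level sets are $\Sigma$-separated'' become two ways of saying the same thing, and each implication amounts to passing from one formulation to the other.

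For $(2) \Rightarrow (1)$, I would start from an arbitrary disjunct sequence $\{\Delta_n\}$ with the given decomposition $\Delta_n = A_n \cup B_n$, $A_n \cap B_n = \emptyset$, and define a single function $f$ on $\bigcup_n \Delta_n$ by setting $f \equiv 1$ on each $A_n$ and $f \equiv 0$ on each $B_n$; this is well defined because the $\Delta_n$ are pairwise disjoint and $A_n, B_n$ partition $\Delta_n$. Applying (2) to this $f$ yields a subsequence $\{\Delta_{n_k}\}$ and an extension $\widetilde f \in M(X,{\bf 2})$. Setting $C := \widetilde f^{\,-1}(1) \in \Sigma$, the inclusions $\bigcup_k A_{n_k} \subseteq C$ and $\bigcup_k B_{n_k} \subseteq X \setminus C$ follow directly from the agreement $\widetilde f \upharpoonright \bigcup_k \Delta_{n_k} = f \upharpoonright \bigcup_k \Delta_{n_k}$, so the two unions are $\Sigma$-separated and the $M$-property holds.

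For $(1) \Rightarrow (2)$, given $\{\Delta_n\}$ and a function $f: \bigcup_n \Delta_n \to \{0,1\}$, I would recover a decomposition by letting $A_n := \{x \in \Delta_n : f(x) = 1\}$ and $B_n := \{x \in \Delta_n : f(x) = 0\}$, so that $\Delta_n = A_n \cup B_n$ is a disjoint union. The $M$-property then supplies a subsequence $\{\Delta_{n_k}\}$ and a set $C \in \Sigma$ with $\bigcup_k A_{n_k} \subseteq C$ and $\bigcup_k B_{n_k} \subseteq X \setminus C$. Taking $\widetilde f$ to be the characteristic function of $C$ gives an element of $M(X,{\bf 2})$, since its preimage of $1$ is $C \in \Sigma$ and its preimage of $0$ is $X \setminus C \in \Sigma$; and by construction $\widetilde f = 1 = f$ on every $A_{n_k}$ and $\widetilde f = 0 = f$ on every $B_{n_k}$, so $\widetilde f$ agrees with $f$ on $\bigcup_k \Delta_{n_k}$.

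The argument is essentially a change of viewpoint and presents no serious obstacle; the one point that requires care is verifying that the constructed functions are genuinely indicator measurable, which reduces to checking that the relevant level sets belong to $\Sigma$. This is exactly what membership in $M(X,{\bf 2})$ delivers in the first direction and what $\Sigma$-separatedness delivers in the second, so the two conditions are interchangeable.
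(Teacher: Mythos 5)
Your proof is correct and follows essentially the same route as the paper's: both directions translate between the partition $\Delta_n = A_n \cup B_n$ and a $\{0,1\}$-valued function, and both produce $\widetilde{f}$ as the characteristic function of the separating set $C$ (your roles of $0$ and $1$ are merely swapped relative to the paper, which is immaterial). Your explicit verification that the characteristic function of $C$ is indicator measurable is a small point the paper leaves implicit, but the argument is the same.
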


\begin{proof}

$(1)\Rightarrow(2)$. Let $\{\Delta_{n} : n\in \mathbb{N}\}$ be a
disjunct sequence of finite subsets of $X$, $f:
\bigcup\limits_{n=1}^{\infty}\Delta_n \rightarrow \{0,1\}$. Let
$A_n=f^{-1}(0)\cap \Delta_n$, $B_n=f^{-1}(1)\cap \Delta_n$ for
every $n\in \mathbb{N}$. By (1), there is a subsequence
$\{\Delta_{n_k} : k\in \mathbb{N}\}$ such that the sets
$A=\bigcup\limits_{k=1}^{\infty} A_{n_k}$ and
$B=\bigcup\limits_{k=1}^{\infty} B_{n_k}$ are $\Sigma$-separated,
i.e. there is $C\in \Sigma$ such that $A\subseteq C$ and
$B\subseteq X\setminus C$. Consider the function $\widetilde{f}\in
M(X,{\bf 2})$ such that $\widetilde{f}(C)\subseteq \{0\}$ and
$\widetilde{f}(X\setminus C)\subseteq \{1\}$. It is clear that
$\widetilde{f}\upharpoonright \bigcup\limits_{k=1}^{\infty}
\Delta_{n_k}=f\upharpoonright \bigcup\limits_{k=1}^{\infty}
\Delta_{n_k}$.

$(2)\Rightarrow(1)$. Let $\{\Delta_{n} : n\in \mathbb{N}\}$ be a
disjunct sequence of finite subsets of $X$, $\Delta_n=A_n\cup
B_n$, $A_n\cap B_n=\emptyset$, $n\in \mathbb{N}$. Let $f:
\bigcup\limits_{n=1}^{\infty}\Delta_n \rightarrow \{0,1\}$ such
that $f(x)=0$ for $x\in \bigcup\limits_{n=1}^{\infty} A_{n}$ and
$f(x)=1$ for $x\in \bigcup\limits_{n=1}^{\infty} B_{n}$. By (2),
there is a subsequence $\{\Delta_{n_k} : k\in \mathbb{N}\}$ and
$\widetilde{f}\in M(X,{\bf 2})$ such that
$\widetilde{f}\upharpoonright \bigcup\limits_{k=1}^{\infty}
\Delta_{n_k}=f\upharpoonright \bigcup\limits_{k=1}^{\infty}
\Delta_{n_k}$. It is clear that the subsequence $\{\Delta_{n_k} :
k\in \mathbb{N}\}$ required where the sets
$A=\bigcup\limits_{k=1}^{\infty} A_{n_k}$ and
$B=\bigcup\limits_{k=1}^{\infty} B_{n_k}$ are $\Sigma$-separated
by $C=\widetilde{f}^{-1}(0)$.
\end{proof}

\begin{lemma}\label{25}  Let $(X,\Sigma)$ be a measurable space,
$\Delta=\{x_1,...,x_m\}$ be a finite subset of $X$ and $F$ be a
closed nowhere dense subset of $M(X,{\bf 2})$. Then there is an
open base set $V$ in the space $M(X,{\bf 2})$ such that

(a)  $suppV \cap \Delta=\emptyset$;

(b) for any $f\in V$ there is  $\varphi: A \rightarrow \{0,1\}$
where $A:=\Delta\cup supp V$ such that $\varphi=f\upharpoonright
A$ and $\varphi\not\in \overline{\{h\upharpoonright A: h\in
F\}}^{{\bf 2}^{A}}$.

\end{lemma}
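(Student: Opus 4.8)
The plan is to observe first that condition (b) is just a reformulation of the disjointness $V\cap F=\emptyset$, and then to build such a $V$ with $supp\,V\cap\Delta=\emptyset$ by a finite induction over the finitely many ``slices'' determined by the values on $\Delta$.

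First I would reduce (b) to a cleaner statement. Write $V=O(g,S)\cap M(X,{\bf 2})$ with $S:=supp\,V$, so that $A=\Delta\cup S$. Since $A$ is finite, ${\bf 2}^A$ is finite and discrete, hence every subset is closed, so $\overline{\{h\upharpoonright A:h\in F\}}^{{\bf 2}^A}=\{h\upharpoonright A:h\in F\}$. Thus (b) asserts exactly that for every $f\in V$ the function $\varphi:=f\upharpoonright A$ differs from $h\upharpoonright A$ for all $h\in F$. I claim this is guaranteed once $V\cap F=\emptyset$: if $f\in V$ and $h\in F$, then $h\notin V$, so $h\upharpoonright S\neq g\upharpoonright S=f\upharpoonright S$, and since $S\subseteq A$ we get $f\upharpoonright A\neq h\upharpoonright A$. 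So it suffices to produce a nonempty open base set $V$ with $supp\,V\cap\Delta=\emptyset$ and $V\cap F=\emptyset$, which will give both (a) and (b).

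The real difficulty is the requirement $supp\,V\cap\Delta=\emptyset$: any base set whose support avoids $\Delta$ leaves the values on $\Delta$ entirely free, hence meets every slice $M_\psi:=O(\psi,\Delta)\cap M(X,{\bf 2})$ with $\psi\in{\bf 2}^\Delta$, so I must avoid $F$ in all $2^m$ clopen slices simultaneously. I would do this by a finite induction producing a single support $T\subseteq X\setminus\Delta$ and a single $g\colon T\to\{0,1\}$. Enumerate ${\bf 2}^\Delta=\{\psi_1,\dots,\psi_{2^m}\}$. Suppose finite $T_{k-1}\subseteq X\setminus\Delta$ and $g_{k-1}\colon T_{k-1}\to\{0,1\}$ are already chosen so that $O(g_{k-1}\cup\psi_j,\,T_{k-1}\cup\Delta)\cap M(X,{\bf 2})$ is disjoint from $F$ for all $j<k$. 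Consider $W:=O(g_{k-1}\cup\psi_k,\,T_{k-1}\cup\Delta)\cap M(X,{\bf 2})$. By the standing density assumption $W$ is nonempty, and since $F$ is nowhere dense $W\not\subseteq F$; hence $W\setminus F$ contains a base set $O(h_k,\,T_{k-1}\cup\Delta\cup R_k)\cap M(X,{\bf 2})$ disjoint from $F$, with $R_k\subseteq X\setminus(\Delta\cup T_{k-1})$ and $h_k$ extending $g_{k-1}\cup\psi_k$. Put $T_k:=T_{k-1}\cup R_k$ and $g_k:=h_k\upharpoonright T_k$.

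What makes the induction succeed is monotonicity: as $T_k\supseteq T_{k-1}$ and $g_k$ extends $g_{k-1}$, for each $j\le k$ we have $O(g_k\cup\psi_j,T_k\cup\Delta)\subseteq O(g_{k-1}\cup\psi_j,T_{k-1}\cup\Delta)$ (for $j=k$ this is the set just selected), so disjointness from $F$ is preserved for every already-treated slice. After $2^m$ steps set $T:=T_{2^m}$, $g:=g_{2^m}$, and $V:=O(g,T)\cap M(X,{\bf 2})$. Then $supp\,V=T$ is disjoint from $\Delta$, $V$ is nonempty, and $V=\bigsqcup_{\psi\in{\bf 2}^\Delta}\big(O(g\cup\psi,T\cup\Delta)\cap M(X,{\bf 2})\big)$ is a disjoint union of sets each missing $F$, so $V\cap F=\emptyset$; by the reduction above $V$ satisfies (a) and (b). The main obstacle is precisely this simultaneous avoidance of $F$ over all $\Delta$-slices: an ordinary ``shrink a neighborhood of a point outside $F$'' argument fails to control the free coordinates on $\Delta$, and this is resolved by the slice-by-slice refinement, using nowhere density of $F$ inside each nonempty slice-neighborhood together with the density of $M(X,{\bf 2})$ in ${\bf 2}^X$.
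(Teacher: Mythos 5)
Your proof is correct and takes essentially the same route as the paper's: your growing pairs $(T_k,g_k)$ are exactly the paper's decreasing base sets $V_k$ (with the paper's auxiliary sets $W_k$ playing the role of your slices $O(\psi_k,\Delta)$), and the finite induction that uses nowhere density of $F$ inside each nonempty slice, with monotonicity preserving the already-treated slices, is precisely the paper's construction. Your preliminary reduction of condition (b) to the disjointness $V\cap F=\emptyset$ via discreteness of ${\bf 2}^A$ is a clean reformulation of what the paper verifies directly through its function $\varphi$, but it does not change the substance of the argument.
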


\begin{proof}

Renumber all points of the set ${\bf 2}^{\Delta}$ as $\{t_i: 1\leq
i\leq 2^{\Delta}\}$. Denote by $W(\Delta,t_i):=\{f\in M(X,{\bf
2}): f\upharpoonright \Delta=t_i\}$ for every $1\leq i\leq
2^{\Delta}$.

Construct by induction open base sets $W_i$, $V_i$, $1\leq i\leq
2^{\Delta}$ such that

(1) $supp W_i=\Delta$ for every $1\leq i\leq 2^{\Delta}$;

(2) $V_{i+1}\subseteq V_i$, $suppV_{i+1}\supseteq suppV_i$ for
every $1\leq i\leq 2^{\Delta}-1$;

(3) $suppV_i\cap \Delta=\emptyset$;

(4)  $W_{i+1}\cap V_{i+1}\subseteq (W(\Delta, t_{i+1})\cap V_i)\setminus F$.

\medskip

Since $W(\Delta,t_1)$ is open and nonempty, there are open base
sets $W_1$ and $V_1$ such that $W_1\cap V_1\subseteq
W(\Delta,t_1)\setminus F$, $suppW_1=\Delta$, $suppV_1\cap
\Delta=\emptyset$.

Let the sets $W_i$, $V_i$, $1\leq i\leq k<2^{\Delta}$ be
constructed.

Then the set $W(\Delta,t_{k+1})\cap V_k$ is open and non-empty.
Hence, there are open base sets $W_{k+1}$, $V_{k+1}$ such that
$W_{k+1}\cap V_{k+1}\subseteq (W(\Delta,t_{k+1})\cap V_k)\setminus
F$, $suppW_{k+1}=\Delta$, $supp V_{k+1}\supseteq supp V_k$,
$V_{k+1}\subseteq V_k$.

Thus, the sets $W_i$, $V_i$, $1\leq i\leq 2^{\Delta}$ are
constructed.

We show that $V=V_{2^{\Delta}}$ is required. Note that $supp V\cap
\Delta=\emptyset$. Let $f\in V$. Then there is $i_0$ such that
$t_{i_0}=f\upharpoonright \Delta$.

Let $W_{i_0}=\{g\in M(X, {\bf 2}): g(P)\subseteq \{0\}$,
$g(Q)\subseteq \{1\} \}$ for finite subsets $P$ and $Q$ of $X$
such that $P\cap Q=\emptyset$ and $P\cup Q=\Delta$.

Let $A:=\Delta\cup supp V$. Consider the function $\varphi: A
\rightarrow \{0,1\}$ such that $\varphi(P)\subseteq \{0\}$,
$\varphi(Q)\subseteq \{1\}$, $\varphi \upharpoonright suppV=
f\upharpoonright supp V$.

 Since $W_{i_0}\cap V_{i_0}\subseteq W(\Delta, t_{i_0})$ and $V\subseteq V_{i_0}$, $W_{i_0}\cap V\subseteq W(\Delta, t_{i_0})$.
 Note that  $\varphi=f\upharpoonright A$.

Since $(W_{i_0}\cap V)\cap F=\emptyset$, $\varphi\not\in
\overline{\{h\upharpoonright A: h\in F\}}^{{\bf 2}^{A}}$.

\end{proof}

\section{Main results}

\begin{theorem}\label{th1} For any  measurable
space  $(X,\Sigma)$ the space $M(X, {\bf 2})$ is Baire.

\end{theorem}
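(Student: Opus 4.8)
The plan is to verify the Baire Category Theorem directly: given a sequence $\{U_n : n\in\mathbb{N}\}$ of open dense subsets of $M(X,{\bf 2})$, I would show that $\bigcap_{n} U_n$ meets every nonempty open set. Since the sets $O(f,A)\cap M(X,{\bf 2})$ form a base, it suffices to fix a nonempty basic open set $W=O(\varphi_0,\Delta_0)$ and produce a single $g\in W\cap\bigcap_n U_n$. Writing $F_n=M(X,{\bf 2})\setminus U_n$, each $F_n$ is closed with empty interior, hence closed nowhere dense, so Lemma \ref{25} applies to every $F_n$.

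The heart of the argument is an inductive construction of an increasing chain of finite partial functions. Starting from $A_0=\Delta_0$ and $\varphi_0$, suppose at stage $n$ we have a finite set $A_n\supseteq\Delta_0$ and $\varphi_n\colon A_n\to\{0,1\}$ extending $\varphi_0$ with $O(\varphi_n,A_n)\cap F_j=\emptyset$ for all $j\le n$. Applying Lemma \ref{25} with $\Delta:=A_n$ and $F:=F_{n+1}$ yields a base set $V_{n+1}$ with $supp\,V_{n+1}\cap A_n=\emptyset$. Because the supports are disjoint and $M(X,{\bf 2})$ is dense in ${\bf 2}^X$, the set $V_{n+1}\cap O(\varphi_n,A_n)$ is nonempty; choosing $f$ in it, Lemma \ref{25} supplies $\varphi_{n+1}\colon A_{n+1}\to\{0,1\}$ on $A_{n+1}:=A_n\cup supp\,V_{n+1}$ with $\varphi_{n+1}=f\!\upharpoonright\!A_{n+1}$ and $\varphi_{n+1}\notin\overline{\{h\!\upharpoonright\!A_{n+1}:h\in F_{n+1}\}}^{{\bf 2}^{A_{n+1}}}$. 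Since $A_{n+1}$ is finite the latter closure is the set itself, so no member of $F_{n+1}$ restricts to $\varphi_{n+1}$, i.e. $O(\varphi_{n+1},A_{n+1})\cap F_{n+1}=\emptyset$; and as $f\in O(\varphi_n,A_n)$ we get $\varphi_{n+1}\!\upharpoonright\!A_n=\varphi_n$, so the chain is compatible and the invariant persists.

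Setting $A_\infty=\bigcup_n A_n$ and $\varphi_\infty=\bigcup_n\varphi_n$, I expect the main obstacle to be realizing $\varphi_\infty$ by an \emph{actual} measurable function rather than merely a pointwise limit. The sets $\varphi_\infty^{-1}(0)$ and $\varphi_\infty^{-1}(1)$ are disjoint countable subsets of $X$; under the standing assumption that distinct singletons are $\Sigma$-separated, any two disjoint countable sets are $\Sigma$-separated (for each point of the first set intersect the countably many separators excluding the whole second set, then take the countable union), which is precisely the $M$-property of Lemma \ref{24} applied to the full sequence $\{supp\,V_n\}$. Hence there is $C\in\Sigma$ with $\varphi_\infty^{-1}(0)\subseteq C$ and $\varphi_\infty^{-1}(1)\subseteq X\setminus C$, and the indicator $g$ of $X\setminus C$ lies in $M(X,{\bf 2})$ and satisfies $g\!\upharpoonright\!A_\infty=\varphi_\infty$. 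Then $g\in O(\varphi_0,\Delta_0)=W$ and $g\in O(\varphi_n,A_n)$ for every $n$, so $g\notin F_n$, i.e. $g\in U_n$, for all $n$. Thus $W\cap\bigcap_n U_n\neq\emptyset$, and since $W$ was an arbitrary nonempty basic open set, $\bigcap_n U_n$ is dense and $M(X,{\bf 2})$ is Baire.
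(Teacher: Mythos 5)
Your proof is correct, and it takes a genuinely different route from the paper's, although both arguments turn on the same key tool: iterated applications of Lemma \ref{25} producing base sets with pairwise disjoint supports. The paper argues by contradiction: it assumes $M(X,{\bf 2})$ is not Baire and writes it as an increasing union of closed nowhere dense sets $F_i$ (a reduction from ``non-Baire'' to ``meager'' that silently uses the homogeneity of $M(X,{\bf 2})$ as a topological group, via Theorem 2.3 of \cite{LM}); it then picks $f_k\in U_k$, glues their restrictions on the disjoint supports $\Delta_k$, and invokes the $M$-property through Lemma \ref{24} to obtain $\widetilde{f}\in M(X,{\bf 2})$ agreeing with the glued function along a \emph{subsequence}, so that $\widetilde{f}\notin F_{n_k}$ for all $k$, contradicting the covering (here the monotonicity $F_i\subset F_{i+1}$ is exactly what makes a subsequence suffice). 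You instead verify Baireness directly: you fix the dense open sets and a basic open set $W$, maintain an increasing chain of finite partial functions $\varphi_n$ whose basic sets avoid $F_1,\dots,F_n$ --- an invariant that persists automatically because $O(\varphi_{n+1},A_{n+1})\subseteq O(\varphi_n,A_n)$ --- and finally realize $\varphi_\infty$ by an actual measurable function using the fact that two disjoint \emph{countable} sets are $\Sigma$-separated under the standing assumption. One small imprecision: that last fact is not ``precisely'' Lemma \ref{24}, whose conclusion only yields a subsequence; but the direct argument you give in parentheses (intersect the countably many separators for each point, then take the countable union) is exactly the computation in part (1) of the paper's proof and yields separation of the \emph{full} sequence, which is the strength your final step genuinely needs, so there is no gap. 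What your route buys: it is more self-contained (no appeal to homogeneity or to the non-meager/Baire equivalence, no subsequences, no rearrangement of the $F_i$ into an increasing chain) and it exhibits the density of $\bigcap_n U_n$ explicitly. What the paper's route buys: it isolates the $M$-property (Definition 2.3 and Lemma \ref{24}) as a reusable notion, mirroring the earlier treatment of Baire indicator functions in \cite{Osip}, at the cost of routing the argument through meagerness and subsequence extraction.
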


\begin{proof}

$(1)$ We claim that  $X$ has the $M$-property.  Let $\{\Delta_{n} : n\in \mathbb{N}\}$ be a disjunct sequence of
finite subsets $\Delta_n\subset X$, $\Delta_n=A_n\cup B_n$,
$A_n\cap B_n=\emptyset$, $n\in \mathbb{N}$.

Let $x,y\in \bigcup \limits_{n=1}^{\infty} \Delta_{n}$ and $x\neq
y$. Since the sets $\{x\}$ and $\{y\}$ are $\Sigma$-separated,
there is $C_y\in \Sigma$ such that $x\in C_y$ and $y\in X\setminus
C_y$. Let $C(x)=\bigcap\{C_y: y\in \bigcup \limits_{n=1}^{\infty}
\Delta_{n}\setminus\{x\}\}$. Since $\Sigma$ is a $\sigma$-algebra,
$C(x)\in \Sigma$. Note that $C(x)\cap C(y)=\emptyset$ for $x\neq
y$. Let $C=\bigcup\{C(x): x\in \bigcup\limits_{n=1}^\infty A_n\}$.
Then $C\in \Sigma$. Note that $\bigcup\limits_{n=1}^{\infty}
A_{n}\subseteq C$ and $\bigcup\limits_{n=1}^{\infty}
B_{n}\subseteq X\setminus C$.

$(2)$.  We claim that the space $M(X, {\bf 2})$ is Baire.  Suppose the opposite. Let $M(X, {\bf
2})=\bigcup\limits_{i=1}^{\infty} F_i$, where $F_i$ is a closed
nowhere dense subset of $M(X, {\bf 2})$ and $F_i\subset F_{i+1}$
for every $i\in \mathbb{N}$.

Let $U_1$ be a non-empty open base set such that $U_1\cap
F_1=\emptyset$. Denote by $\Theta_1=supp U_1$. By Lemma 2, for a
finite subset $\Theta_1$ of $X$ and nowhere dense set $F_2$ in
$M(X, {\bf 2})$ there is an open base set $U_2$ such that

(1) $supp U_2\cap \Theta_1=\emptyset$;

(2) for any $f\in U_2$ there is $\varphi\in {\bf 2}^{\Theta_2}$,
where $\Theta_2:=supp U_2\cup \Theta_1$ such that
$\varphi=f\upharpoonright \Theta_2$ and $\varphi\not\in
\overline{\{h\upharpoonright \Theta_2: h\in F_2\}}^{{\bf
2}^{\Theta_2}}$.

For every $k\in \mathbb{N}$, by Lemma \ref{25}, for the finite set
$\Theta_k$ in $X$ and nowhere dense set $F_{k+1}$ in $M(X, {\bf
2})$ there is a base open set $U_{k+1}$ such that

(1) $supp U_{k+1}\cap \Theta_k=\emptyset$;

(2) for any $f\in U_{k+1}$ there is  $\varphi\in {\bf
2}^{\Theta_{k+1}}$ where $\Theta_{k+1}=supp U_{k+1}\cup \Theta_k$
such that $\varphi=f\upharpoonright \Theta_{k+1}$ and
$\varphi\not\in \overline{\{h\upharpoonright \Theta_{k+1}: h\in
F_{k+1}\}}^{{\bf 2}^{\Theta_{k+1}}}$.

Let $\Delta_{1}=supp U_1=\Theta_1$ and $\Delta_{k}=supp
U_k=\Theta_k\setminus \Theta_{k-1}$ for every $k>1$.

For every $k\in \mathbb{N}$ consider $f_k\in U_k$. Define the map
$f: \bigcup\limits_{k=1}^{\infty} \Delta_{k} \rightarrow \{0,1\}$
such that $f\upharpoonright \Delta_{k}=f_k\upharpoonright
\Delta_{k}$ for every $k\in \mathbb{N}$.

Since $X$ has the $M$-property, there are a subsequence $\{n_k: k\in \mathbb{N}\}$ and
$\widetilde{f}\in M(X, {\bf 2})$ such that
$\widetilde{f}\upharpoonright \bigcup\limits_{k=1}^{\infty}
\Delta_{n_k}=f\upharpoonright \bigcup\limits_{k=1}^{\infty}
\Delta_{n_k}$ by Lemma \ref{24}. Then $\widetilde{f}\not\in F_{n_k}$ for every $k\in
\mathbb{N}$. Since $M(X, {\bf 2})=\bigcup\limits_{k=1}^{\infty}
F_{n_k}$, we receive a contradiction.

\end{proof}

\medskip
\begin{corollary}\label{cr1} Let $\{(X_{\alpha}, \Sigma_{\alpha}): \alpha\in
A\}$ be a family measurable spaces. Then $\prod\limits_{\alpha\in A} M(X_{\alpha},{\bf 2})$ is Baire.
\end{corollary}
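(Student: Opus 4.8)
The plan is to avoid multiplying Baire spaces directly (which in general fails) and instead to recognize the whole product as a \emph{single} space of indicator measurable functions, to which Theorem~\ref{th1} applies. Concretely, I would form the disjoint union $X=\bigsqcup_{\alpha\in A}X_\alpha$ and equip it with
$$\Sigma=\{S\subseteq X: S\cap X_\alpha\in\Sigma_\alpha \text{ for every } \alpha\in A\}.$$
Because complement, countable union and countable intersection are computed coordinatewise and each $\Sigma_\alpha$ is a $\sigma$-algebra, a routine check shows that $\Sigma$ is a $\sigma$-algebra on $X$, so $(X,\Sigma)$ is a measurable space.

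Next I would verify the standing separation assumption for $(X,\Sigma)$, since Theorem~\ref{th1} is stated under it. If $x,y$ lie in the same $X_\alpha$, a $\Sigma_\alpha$-separating set $C\in\Sigma_\alpha$ also belongs to $\Sigma$ (its trace on every other $X_\beta$ is empty), and it separates $\{x\}$ from $\{y\}$. If $x\in X_\alpha$ and $y\in X_\beta$ with $\alpha\neq\beta$, then $X_\alpha\in\Sigma$ itself separates them. Hence singletons are $\Sigma$-separated and the hypotheses of Theorem~\ref{th1} are met.

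The heart of the argument is the identification of the function spaces. A function $f:X\to\mathbf{2}$ is indicator measurable iff $f^{-1}(i)\cap X_\alpha\in\Sigma_\alpha$ for all $\alpha\in A$ and $i\in\{0,1\}$, that is, iff each restriction $f\upharpoonright X_\alpha$ belongs to $M(X_\alpha,\mathbf{2})$. Thus $f\mapsto(f\upharpoonright X_\alpha)_{\alpha\in A}$ is a bijection of $M(X,\mathbf{2})$ onto $\prod_{\alpha\in A}M(X_\alpha,\mathbf{2})$. In both descriptions the pointwise topology is the subspace topology inherited from $\mathbf{2}^X=\prod_{\alpha\in A}\mathbf{2}^{X_\alpha}$, and a basic open set constrains only finitely many coordinates of $X$; so this bijection is a homeomorphism. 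Applying Theorem~\ref{th1} to $(X,\Sigma)$ shows that $M(X,\mathbf{2})$ is Baire, and therefore so is the homeomorphic product.

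The only delicate point I anticipate is this topological identification: one must confirm that the product of the pointwise topologies on the factors $M(X_\alpha,\mathbf{2})$ coincides with the subspace topology that $\prod_{\alpha\in A}M(X_\alpha,\mathbf{2})$ inherits inside $\mathbf{2}^X$. This is exactly the standard homeomorphism $\prod_{\alpha\in A}\mathbf{2}^{X_\alpha}\cong\mathbf{2}^{\bigsqcup_{\alpha\in A}X_\alpha}$ restricted to the relevant subspaces, and everything else in the proof is bookkeeping. The conceptual payoff is that the seemingly hard task of establishing productivity of the Baire property is absorbed into one instance of Theorem~\ref{th1}.
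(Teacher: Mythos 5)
Your proposal is correct and follows essentially the same route as the paper: the paper likewise forms the direct sum $\bigoplus_{\alpha\in A}X_\alpha$ with the coordinatewise $\sigma$-algebra $\Sigma_{\bigoplus}$, identifies $\prod_{\alpha\in A}M(X_\alpha,{\bf 2})$ with $M(\bigoplus_{\alpha\in A}X_\alpha,{\bf 2})$ via the (inverse of your) gluing/restriction homeomorphism, and invokes Theorem~\ref{th1}. Your explicit verification of the standing $\Sigma$-separation assumption for the disjoint union is a point the paper's proof leaves tacit, so that detail is a welcome addition rather than a deviation.
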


\begin{proof} Let $\Sigma_{\bigoplus}=\{A\subset \bigoplus\limits_{\alpha\in A} X_{\alpha}: A\cap X_{\alpha}\in \Sigma_{\alpha}$ for each $\alpha\in
A\}$. Then $(\bigoplus\limits_{\alpha\in A} X_{\alpha}, \Sigma_{\bigoplus})$ is a measurable space.

   We claim that $\prod\limits_{\alpha\in A}
M(X_{\alpha},{\bf 2})\cong M(\bigoplus\limits_{\alpha\in A} X_{\alpha},{\bf 2})$.
To this end, we define a mapping $f\rightarrow f^*$ where $f\in \prod\limits_{\alpha\in A}
M(X_{\alpha},{\bf 2})$ and $f^*\in M(\bigoplus\limits_{\alpha\in A} X_{\alpha},{\bf 2})$.

Let $f\in \prod\limits_{\alpha\in A}
M(X_{\alpha},{\bf 2})$. Then $f=(f_{\alpha})$ where $f_{\alpha}\in M(X_{\alpha},{\bf 2})$ for each $\alpha\in A$.

Define $f^*: \bigoplus\limits_{\alpha\in A} X_{\alpha} \rightarrow \{0,1\}$ by letting $f^*\upharpoonright X_{\alpha}=f_{\alpha}$ for each $\alpha\in A$.

We claim that  $f^*\in M(\bigoplus\limits_{\alpha\in A} X_{\alpha},{\bf 2})$. Let $i\in\{0,1\}$. Then $(f^*)^{-1}(i)\cap X_{\alpha}=(f_{\alpha})^{-1}(i)\cap X_{\alpha}\in \Sigma_{\alpha}$ for each $\alpha\in
A$. It follows that $(f^*)^{-1}(i)\in \Sigma_{\bigoplus}$ and, hence, $f^*\in M(\bigoplus\limits_{\alpha\in A} X_{\alpha},{\bf 2})$.

\medskip
The mapping $f\rightarrow f^*$ is one-to-one mapping from  $\prod\limits_{\alpha\in A} M(X_{\alpha},{\bf 2})$ onto $M(\bigoplus\limits_{\alpha\in A} X_{\alpha}, {\bf 2})$.
Furthermore, both the mapping and its inverse are continuous. Thus, the mapping is a homeomorphism.

\medskip
By Theorem \ref{th1}, $M(\bigoplus\limits_{\alpha\in A} X_{\alpha},{\bf 2})$ is Baire, hence, the space $\prod\limits_{\alpha\in A} M(X_{\alpha},{\bf 2})$ is Baire.
\end{proof}

Recall that a mapping $\varphi: K\rightarrow M$ is called {\it
almost open} if $Int \varphi(V)\neq \emptyset$ whenever non-empty
open subset $V$ of $K$. Note that irreducible mappings are almost
open mappings that are defined on compact spaces. Also note that
if $\varphi_{\alpha}:K_{\alpha}\rightarrow M_{\alpha}$ ($\alpha\in
A$) are surjective almost open mappings then the product mapping
$\prod\limits_{\alpha\in A} \varphi_{\alpha}:
\prod\limits_{\alpha\in A} K_{\alpha}\rightarrow
\prod\limits_{\alpha\in A} M_{\alpha}$ is also almost open.

\begin{lemma} (Lemma 3.14 in \cite{OsPy})\label{327} Let $\psi:P\rightarrow L$ be a surjective continuous
almost open mapping and $E\subset P$ be a dense non-meager (Baire)
subspace in $P$. Then $\psi(E)$ is non-meager (Baire).
\end{lemma}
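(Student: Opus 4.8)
The plan is to prove both assertions through a single transfer mechanism that moves dense open sets between $\psi(E)$ and $E$ using the almost open property of $\psi$. First I would record the elementary fact that $\psi(E)$ is dense in $L$: since $\psi$ is continuous and surjective, $L=\psi(\overline{E})\subseteq\overline{\psi(E)}$, so every nonempty open subset of $L$ meets $\psi(E)$.

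The heart of the argument is the following transfer claim: if $H$ is open in $L$ and $H\cap\psi(E)$ is dense in $\psi(E)$, then $\psi^{-1}(H)\cap E$ is a dense open subset of $E$. Openness is immediate from continuity. For density I would take an arbitrary nonempty open $U\subseteq P$, set $W=Int\,\psi(U)\neq\emptyset$ by almost openness, and use density of $H\cap\psi(E)$ to pick $y\in H\cap W\cap\psi(E)$. Since $y\in W\subseteq\psi(U)$, the set $U\cap\psi^{-1}(H)\cap\psi^{-1}(W)$ is a nonempty open subset of $P$; density of $E$ then yields a point $e\in E$ lying in it, whence $e\in U\cap E$ and $\psi(e)\in H$, as required.

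With this claim, the non-meager case goes by contradiction: if $\psi(E)$ were meager we could write it as $\bigcup_n N_n$ with each $N_n$ closed nowhere dense in $\psi(E)$; writing $\psi(E)\setminus N_n=H_n\cap\psi(E)$ with $H_n$ open in $L$, the transfer claim makes each $\psi^{-1}(H_n)\cap E$ dense open in $E$, so each $E\setminus(\psi^{-1}(H_n)\cap E)$ is nowhere dense in $E$. Since $\bigcap_n H_n\cap\psi(E)=\psi(E)\setminus\bigcup_n N_n=\emptyset$, we get $\psi^{-1}(\bigcap_n H_n)\cap E=\emptyset$, so these nowhere dense sets cover $E$, exhibiting $E$ as meager and contradicting the hypothesis. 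The Baire case is the positive counterpart: given dense open $G_n=H_n\cap\psi(E)$ in $\psi(E)$, the transfer claim gives dense open $\psi^{-1}(H_n)\cap E$ in $E$, so Baireness of $E$ makes $\psi^{-1}(\bigcap_n H_n)\cap E$ dense in $E$; pushing a witness forward through $\psi$ and using density of $\psi(E)$ then shows $\bigcap_n G_n=(\bigcap_n H_n)\cap\psi(E)$ meets every nonempty open subset of $\psi(E)$.

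I expect the transfer claim to be the main obstacle, specifically the point where one must resist the naive move of pulling a witness $y\in G_n\cap\psi(E)$ straight back to a point of $E$ inside $U$: a preimage of $y$ landing in $E$ need not land in $U$, and a preimage landing in $U$ need not land in $E$. The resolution is to first produce the whole open set $U\cap\psi^{-1}(H)\cap\psi^{-1}(W)$, on which both membership in $U$ and $\psi$-image in $H$ are forced simultaneously, and only then invoke the density of $E$ in $P$.
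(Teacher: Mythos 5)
Your proof is correct, but there is nothing in the paper to measure it against: the paper states this lemma purely as a citation (Lemma 3.14 of the Osipov--Pytkeev paper in Acta Math.\ Hungar.) and gives no proof of it, so your argument supplies what the paper omits. Each step checks out. The transfer claim is the right engine: openness of $\psi^{-1}(H)\cap E$ in $E$ is just continuity, and for density you correctly combine the three needed facts --- $W=Int\,\psi(U)\neq\emptyset$ by almost openness; $H\cap\psi(E)$ is dense in all of $L$ (density in $\psi(E)$ composed with the density of $\psi(E)$ in $L$, which your opening paragraph establishes); and the pullback of a witness $y\in W\subseteq\psi(U)$ to a point of $U$, which makes $U\cap\psi^{-1}(H)\cap\psi^{-1}(W)$ a nonempty open subset of $P$ that the dense set $E$ must meet. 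The non-meager case then goes through as you say: the complements $H_n\cap\psi(E)$ of the closed nowhere dense pieces transfer to dense open subsets $\psi^{-1}(H_n)\cap E$ of $E$ with empty intersection, exhibiting $E$ as meager. The Baire case is also sound: Baireness of $E$ makes $\psi^{-1}\bigl(\bigcap_n H_n\bigr)\cap E$ dense in $E$, and for any open $O\subseteq L$ meeting $\psi(E)$ the set $\psi^{-1}(O)\cap E$ is a nonempty open subset of $E$, so it contains a point $e$ of that dense set, and $\psi(e)\in O\cap\bigcap_n G_n$. Finally, the pitfall you flag at the end --- that a preimage of a witness point need not lie in both $U$ and $E$ simultaneously --- is indeed the only delicate point, and your resolution (manufacture an open set on which both constraints are automatic, then invoke density of $E$) is exactly the right one.
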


\begin{lemma}(\cite{ArPo})\label{316} Let $L$ be a compact space without isolated
points, $T$ be the Cantor set. Then there exists a surjective
continuous irreducible mapping $\psi:T\rightarrow L$.

\end{lemma}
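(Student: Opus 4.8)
The plan is to deduce the statement from two classical facts about the Cantor set together with a standard minimality argument; note first that since a continuous image of the metrizable compactum $T$ is metrizable, the lemma is to be read for a metrizable $L$ (as is the case throughout this paper). By the Alexandroff--Hausdorff theorem every nonempty compact metrizable space is a continuous image of the Cantor set, so I would begin by fixing a continuous surjection $f:T\rightarrow L$. This $f$ need not be irreducible, so the next task is to cut $T$ down to a minimal closed subset on which $f$ becomes irreducible.

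To produce such a minimal set, I would apply Zorn's lemma to the family of all closed subsets $Z\subseteq T$ with $f(Z)=L$, ordered by reverse inclusion. A decreasing chain $\{Z_i\}$ has a lower bound $Z=\bigcap_i Z_i$: for each $y\in L$ the sets $f^{-1}(y)\cap Z_i$ form a decreasing family of nonempty compact sets, so by the finite intersection property $f^{-1}(y)\cap Z$ is nonempty, whence $f(Z)=L$. Let $T_0$ be a minimal element and put $\psi:=f\upharpoonright T_0$. Then $\psi$ is a continuous surjection onto $L$ such that no proper closed subset of $T_0$ maps onto $L$, i.e. $\psi$ is irreducible.

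The heart of the argument is to verify that $T_0$ has no isolated points. Suppose $x$ were isolated in $T_0$ and set $y=f(x)$. Then $T_0\setminus\{x\}$ is closed, and by minimality $f(T_0\setminus\{x\})\neq L$; since every point of $L$ other than $y$ still has a preimage in $T_0\setminus\{x\}$, in fact $f(T_0\setminus\{x\})=L\setminus\{y\}$. As $\psi$ is a closed map (a continuous map from a compact space to a Hausdorff space), $L\setminus\{y\}$ is closed, so $\{y\}$ is open in $L$, contradicting the hypothesis that $L$ has no isolated points. Hence $T_0$ is perfect.

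Finally, $T_0$ is a nonempty compact metrizable space that is zero-dimensional, being a closed subset of $T$, and perfect, so by Brouwer's characterization of the Cantor set it is homeomorphic to $T$. Composing a homeomorphism $T\rightarrow T_0$ with $\psi$ — precomposition with a homeomorphism clearly preserves irreducibility and surjectivity — yields the desired surjective continuous irreducible mapping $T\rightarrow L$. I expect the third step, proving that the minimal set $T_0$ carries no isolated points, to be the main obstacle, since it is precisely the point where irreducibility and the absence of isolated points in $L$ must be played off against each other.
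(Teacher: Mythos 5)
Your proof is correct, but there is nothing in the paper to compare it against: the paper does not prove this lemma at all, it simply quotes it from Arkhangel'skii and Ponomarev \cite{ArPo}. What you have written is essentially the standard argument for this classical fact, and every step checks out. The Alexandroff--Hausdorff theorem gives a continuous surjection $f:T\rightarrow L$; the Zorn's lemma reduction (using compactness of the fibers $f^{-1}(y)\cap Z_i$ and the finite intersection property to see that the intersection of a chain still maps onto $L$) produces a minimal closed set $T_0\subseteq T$ with $f(T_0)=L$, i.e.\ an irreducible restriction; the isolated-point step, which you correctly identify as the crux, is handled properly --- if $x$ were isolated in $T_0$, then minimality forces $f(T_0\setminus\{x\})=L\setminus\{f(x)\}$, which is compact and hence closed, making $f(x)$ isolated in $L$, a contradiction; and Brouwer's characterization of the Cantor set (nonempty, compact, metrizable, zero-dimensional, perfect) lets you replace $T_0$ by $T$ itself, precomposition with a homeomorphism clearly preserving surjectivity, continuity and irreducibility. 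You were also right to flag the implicit metrizability hypothesis: a continuous image of $T$ in a Hausdorff space is compact metrizable, so the lemma as literally stated (for an arbitrary compact $L$ without isolated points) would be false; in the paper's application $L$ is always a metrizable compactum, so your reading is the intended one.
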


\begin{lemma}\label{lem6} Let $\psi: K\rightarrow L$ be a continuous almost open surjective mapping. If $M(X,K)$ is a dense subset of $K^X$ and $M(X,K)$ is Baire, then $M(X,L)$ is Baire, too.

\end{lemma}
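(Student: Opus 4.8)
The plan is to realize $M(X,L)$ as the $\psi$-image of $M(X,K)$ under the map induced on function spaces, and then to quote Lemma~\ref{327}. Concretely, I would define $\Psi\colon K^X\to L^X$ by $\Psi(f)=\psi\circ f$. Under the identifications $K^X=\prod_{x\in X}K$ and $L^X=\prod_{x\in X}L$ this is exactly the product mapping $\prod_{x\in X}\psi$. Hence $\Psi$ is continuous and surjective because $\psi$ is, and by the remark preceding Lemma~\ref{327} (a product of surjective almost open mappings is almost open) $\Psi$ is almost open as well. Thus $\Psi\colon K^X\to L^X$ meets all the hypotheses imposed on the map in Lemma~\ref{327}, while $E:=M(X,K)$ is, by assumption, a dense Baire subspace of $P:=K^X$.

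The crux is the identity $\Psi(M(X,K))=M(X,L)$, and I expect this to be the main obstacle. The inclusion $\Psi(M(X,K))\subseteq M(X,L)$ is immediate: if $f\in M(X,K)$ and $V\subseteq L$ is open, then $(\psi\circ f)^{-1}(V)=f^{-1}(\psi^{-1}(V))\in\Sigma$, since $\psi^{-1}(V)$ is open in $K$ and $f$ is measurable, so $\psi\circ f\in M(X,L)$. For the reverse inclusion I would use a measurable selection argument. Given $g\in M(X,L)$, consider the multifunction $x\mapsto \psi^{-1}(g(x))$, whose values are nonempty (as $\psi$ is surjective) and closed (as $\psi$ is continuous and $L$ is Hausdorff). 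To apply the Kuratowski--Ryll-Nardzewski selection theorem I must verify weak measurability, i.e.\ that $\{x\colon \psi^{-1}(g(x))\cap U\neq\emptyset\}\in\Sigma$ for every open $U\subseteq K$. This set equals $g^{-1}(\psi(U))$; since $K$ is a metrizable compact (hence Polish) space, $U$ is an $F_\sigma$-set, so $\psi(U)$ is a countable union of compacta and therefore Borel in $L$. Because $g$ is measurable, the family of all $B\subseteq L$ with $g^{-1}(B)\in\Sigma$ is a $\sigma$-algebra containing the open sets, hence all Borel sets, so $g^{-1}(\psi(U))\in\Sigma$. The selection theorem then yields a $\Sigma$-measurable $f\colon X\to K$ with $\psi\circ f=g$; such $f$ lies in $M(X,K)$ and satisfies $\Psi(f)=g$.

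With this identity established the proof closes at once: Lemma~\ref{327}, applied to the surjective continuous almost open map $\Psi\colon K^X\to L^X$ and to the dense Baire subspace $M(X,K)$ of $K^X$, gives that $\Psi(M(X,K))$ is Baire; and since $\Psi(M(X,K))=M(X,L)$, the space $M(X,L)$ is Baire. The continuity, surjectivity and almost-openness of $\Psi$ are routine transfers of the corresponding properties of $\psi$, so the only genuinely delicate point is the weak measurability of $x\mapsto\psi^{-1}(g(x))$, which rests on metrizability of $K$ through the fact that continuous images of compacta are Borel.
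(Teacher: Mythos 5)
Your overall strategy---the induced map $\Psi=\psi^X\colon K^X\to L^X$, continuous, surjective and almost open as a product of such maps, followed by an application of Lemma~\ref{327} to the dense Baire subspace $M(X,K)\subseteq K^X$---is exactly the paper's. Where you go astray is on what you call ``the crux.'' The paper never proves, and does not need, the equality $\Psi(M(X,K))=M(X,L)$. It uses only the trivial inclusion $\Psi(M(X,K))\subseteq M(X,L)$ together with two standard observations: first, $\Psi(M(X,K))$ is dense in $L^X$ (since $\Psi$ is continuous and surjective, $L^X=\Psi(\overline{M(X,K)})\subseteq\overline{\Psi(M(X,K))}$), hence in particular dense in $M(X,L)$; second, any space containing a dense Baire subspace is itself Baire. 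So once Lemma~\ref{327} yields that $\Psi(M(X,K))$ is Baire, the conclusion follows immediately, with no lifting of measurable functions whatsoever.

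Your selection-theorem argument for the reverse inclusion is where the genuine gap lies. The lemma is stated for an arbitrary continuous almost open surjection $\psi\colon K\to L$, with no compactness, metrizability or separation hypotheses on $K$ or $L$. You invoke ``$K$ is a metrizable compact (hence Polish) space'' to make every open $U\subseteq K$ an $F_\sigma$-set; you need $L$ Hausdorff for the images $\psi(F_n)$ of compacta to be closed (hence Borel) and for the fibers $\psi^{-1}(g(x))$ to be closed. None of this is available from the hypotheses, and Kuratowski--Ryll-Nardzewski simply does not apply in this generality; indeed a measurable lift of $g$ need not exist for general $K$ and $L$. As written, your argument proves the lemma only under extra assumptions (which do happen to hold in the paper's application, where $K={\bf 2}^{\omega}$), not the lemma as stated. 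The repair is not to strengthen the selection argument but to delete it: replace the claimed equality by the inclusion, note the density of the image, and finish with the dense-Baire-subspace fact.
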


\begin{proof} The mapping $\psi^X: K^X \rightarrow L^X$ is continuous almost open surjective. Since $M(X,K)$ is a dense subset of $K^X$ and $M(X,K)$ is Baire, $\psi^X(M(X,K))$ is Baire by Lemma \ref{327}. Note that $\psi^X(M(X,K))=\{\psi\circ f: f\in M(X,K)\}\subseteq M(X,L)$. Thus, $M(X,L)$ contains a dense Baire subset $\psi^X(M(X,K))$. It follows that $M(X,L)$ is Baire, too.

\end{proof}

\begin{theorem} Let $(X,\Sigma)$ be a measurable space. Then the space $M(X,L)$ is Baire for any  metrizable compact space $L$.
\end{theorem}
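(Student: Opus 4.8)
The plan is to reduce the general metrizable compact case to the Cantor cube $T=\mathbf{2}^{\mathbb N}$, where the Baire property is already available, and then transport it along a suitable almost open surjection by means of Lemma \ref{lem6}.

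First I would show that $M(X,T)$ is Baire and dense in $T^X$. Writing $T=\mathbf{2}^{\mathbb N}$, a function $f\colon X\to\mathbf{2}^{\mathbb N}$ is measurable exactly when each coordinate $\pi_n\circ f\colon X\to\mathbf 2$ is indicator measurable: preimages of the clopen cylinders generating the topology of $\mathbf{2}^{\mathbb N}$ lie in $\Sigma$ iff the coordinate preimages do, and every open subset of $\mathbf{2}^{\mathbb N}$ is a countable union of such cylinders. Transposing coordinates therefore gives a homeomorphism $M(X,T)\cong\prod_{n\in\mathbb N}M(X,\mathbf 2)$ for the topologies of pointwise convergence, and the same transposition identifies $T^X$ with $\prod_{n\in\mathbb N}\mathbf 2^X$. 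By Corollary \ref{cr1}, applied with all factors equal to $M(X,\mathbf 2)$, the countable product $\prod_n M(X,\mathbf 2)$ is Baire, so $M(X,T)$ is Baire. Since $M(X,\mathbf 2)$ is dense in $\mathbf 2^X$ under our standing separation assumption (by the density Proposition of Section 2), and a product of dense subspaces is dense, $\prod_n M(X,\mathbf 2)$ is dense in $\prod_n\mathbf 2^X$; pulling back along the homeomorphism shows $M(X,T)$ is dense in $T^X$.

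Next I would produce a continuous almost open surjection from $T$ onto $L$. The obstacle here is that a metrizable compact $L$ may have isolated points, so Lemma \ref{316} does not apply to $L$ directly. I would remove this difficulty by passing to $L\times T$: since $T$ has no isolated points, neither does $L\times T$, which is again compact and metrizable. Lemma \ref{316} then yields a surjective continuous irreducible, hence almost open, map $\psi\colon T\to L\times T$. Composing with the projection $\pi_L\colon L\times T\to L$, which is continuous, surjective and open, gives $\varphi=\pi_L\circ\psi\colon T\to L$; one checks readily that the composite of an almost open map followed by an open map is almost open (if $U=\operatorname{Int}\psi(V)\neq\emptyset$ then $\pi_L(U)$ is open, nonempty, and contained in $\varphi(V)$), so $\varphi$ is a continuous almost open surjection.

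Finally, taking $K=T$ and using this $\varphi\colon T\to L$ in the role of the mapping in Lemma \ref{lem6}, the hypotheses are met: $M(X,T)$ is dense in $T^X$ and Baire, and $\varphi$ is a continuous almost open surjection, so $M(X,L)$ is Baire. The degenerate case of a one-point $L$ is covered by the same argument (with $M(X,\{*\})$ a single point). I expect the only genuinely delicate issue to be the presence of isolated points in $L$, which the passage to $L\times T$ resolves uniformly; the coordinate bookkeeping behind $M(X,T)\cong\prod_n M(X,\mathbf 2)$ and the resulting density claim are routine.
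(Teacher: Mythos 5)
Your proposal is correct and follows essentially the same route as the paper's own proof: establish that $M(X,\mathbf{2}^{\omega})\cong(M(X,\mathbf{2}))^{\omega}$ is Baire via Corollary \ref{cr1}, use Lemma \ref{316} to get an irreducible (hence almost open) surjection from the Cantor set onto $L\times L_1$ for a compact metrizable $L_1$ without isolated points (the paper takes an arbitrary such $L_1$, you take $L_1=\mathbf{2}^{\mathbb N}$), compose with the projection onto $L$, and apply Lemma \ref{lem6}. Your version is in fact slightly more careful than the paper's, since you explicitly verify the density hypothesis of Lemma \ref{lem6} and the coordinatewise characterization of measurability into the Cantor cube, both of which the paper leaves implicit.
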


\begin{proof} Note that $M(X,{\bf 2}^{\omega})=(M(X,{\bf
2}))^{\omega}$. Thus, by Corollary \ref{cr1}, $M(X,{\bf
2}^{\omega})$ is Baire.

Let $L$ be a metrizable compact space and let $L_1$ be a
metrizable compact space without isolated points. By Lemma
\ref{316}, there is a surjective continuous irreducible mapping
$\psi: {\bf 2}^{\omega}\rightarrow L\times L_1$. Let $\pi_L: L\times L_1\rightarrow L$ be a projection mapping on $L$.

 Then $\pi_L\circ
\psi:{\bf 2}^{\omega}\rightarrow L$ is a surjective continuous
almost open mapping. By Lemma \ref{lem6}, $M(X,L)$ is Baire.

\end{proof}

\section{Examples}

All examples of function spaces given below we consider with the topology of pointwise convergence. 

\medskip

$\bullet$ An important example is the {\it Borel algebra} over any topological space: the $\sigma$-algebra generated by the open sets (or, equivalently, by the closed sets).

\begin{corollary} {\it Let $X$ be a topological $T_1$-space and $K$ be a metrizable compact space. Then the space $Borel(X,K)$ of all $K$-valued Borel functions defined on $X$ is Baire.}
\end{corollary}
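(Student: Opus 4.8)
The plan is to realize $Borel(X,K)$ as an instance of the space $M(X,K)$ for a suitable choice of measurable structure and then invoke the preceding theorem. First I would let $\Sigma$ be the Borel $\sigma$-algebra on $X$, i.e.\ the $\sigma$-algebra generated by the open sets, so that $(X,\Sigma)$ is a measurable space. By the definition of measurability adopted above, a function $f\colon X\to K$ satisfies $f^{-1}(V)\in\Sigma$ for every open $V\subseteq K$ exactly when $f$ is a Borel function; hence $M(X,K)=Borel(X,K)$ as sets, and since both carry the topology of pointwise convergence, they coincide as topological spaces. The corollary will therefore follow once the theorem can be applied to $(X,\Sigma)$ with $L=K$.

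The single hypothesis that must be checked before the theorem applies is the standing assumption imposed after Proposition~2.4: for any two distinct points $x,y\in X$ the singletons $\{x\}$ and $\{y\}$ must be $\Sigma$-separated. This is the only place where the $T_1$ axiom enters, and it is the crux of the argument. Since $X$ is $T_1$, every singleton is closed, so $\{y\}$ is closed and $C:=X\setminus\{y\}$ is open, whence $C\in\Sigma$. As $x\neq y$ we have $x\in C$ and $y\in X\setminus C$, so $C$ witnesses that $\{x\}$ and $\{y\}$ are $\Sigma$-separated, as required.

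With the standing assumption verified, the preceding theorem (that $M(X,L)$ is Baire for every metrizable compact space $L$) applies to $(X,\Sigma)$ and $L=K$, giving that $M(X,K)=Borel(X,K)$ is Baire. I do not expect any genuine obstacle: the entire content of the corollary is the observation that the Borel $\sigma$-algebra of a $T_1$ space separates distinct points in the weak sense demanded by the standing hypothesis, after which the conclusion is immediate from the theorem.
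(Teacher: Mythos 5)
Your proposal is correct and follows exactly the route the paper intends: identify $Borel(X,K)$ with $M(X,K)$ for $\Sigma$ the Borel $\sigma$-algebra, verify the standing point-separation assumption via the $T_1$ axiom (the complement $X\setminus\{y\}$ of a closed singleton is open, hence Borel), and then apply the theorem that $M(X,L)$ is Baire for metrizable compact $L$. The paper leaves this argument implicit, and your write-up supplies precisely the missing verification, so there is nothing to correct.
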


$\bullet$ On the Euclidean space $\mathbb{R}^n$ another $\sigma$-algebra is of importance: that of all Lebesgue measurable sets. This $\sigma$-algebra contains more sets than the Borel $\sigma$-algebra on $\mathbb{R}^n$ and is preferred in integration theory, as it gives a complete measure space.

\begin{corollary}{\it Let $K$ be a metrizable compact space. Then $Lebesgue(\mathbb{R}^n,K)$ of all $K$-valued Lebesgue measurable functions defined on $\mathbb{R}^n$ is Baire.}
\end{corollary}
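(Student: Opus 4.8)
The plan is to recognize $Lebesgue(\mathbb{R}^n,K)$ as a special instance of the space $M(X,K)$ covered by the preceding theorem, and then to check that the underlying measurable space meets the blanket hypothesis imposed in Section 2. Concretely, I would let $\mathcal{L}$ denote the $\sigma$-algebra of all Lebesgue measurable subsets of $\mathbb{R}^n$, so that $(\mathbb{R}^n,\mathcal{L})$ is a measurable space. A function $\mathbb{R}^n\to K$ is Lebesgue measurable exactly when the preimage of every open subset of $K$ belongs to $\mathcal{L}$, i.e. when it is $\mathcal{L}$-measurable in the sense of Section 2. Hence $Lebesgue(\mathbb{R}^n,K)=M(\mathbb{R}^n,K)$, both equipped with the topology of pointwise convergence, and the two spaces coincide as topological spaces.

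The only hypothesis that must be verified before the general theorem applies is the standing assumption that for any two distinct points the corresponding singletons are $\mathcal{L}$-separated (equivalently, by Proposition 2.2, that $M(\mathbb{R}^n,{\bf 2})$ is dense in ${\bf 2}^{\mathbb{R}^n}$). This is immediate: every singleton $\{x\}\subset\mathbb{R}^n$ has Lebesgue measure zero and is therefore Lebesgue measurable, so $\{x\}\in\mathcal{L}$. Given distinct $x,y\in\mathbb{R}^n$, the set $C=\{x\}\in\mathcal{L}$ witnesses the separation, since $\{x\}\subseteq C$ and $\{y\}\subseteq\mathbb{R}^n\setminus C$. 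Thus $(\mathbb{R}^n,\mathcal{L})$ satisfies the blanket assumption of Section 2, and the hypotheses of the general theorem are in force.

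With these two observations in place I would simply invoke the theorem asserting that $M(X,L)$ is Baire for every measurable space $(X,\Sigma)$ and every metrizable compact space $L$, applied with $X=\mathbb{R}^n$, $\Sigma=\mathcal{L}$ and $L=K$. This yields at once that $M(\mathbb{R}^n,K)=Lebesgue(\mathbb{R}^n,K)$ is Baire, completing the argument. Since the corollary is a direct specialization of the general theorem, there is no serious obstacle; the only step requiring attention is the verification of the standing hypothesis, and this reduces to the elementary fact that points of $\mathbb{R}^n$ are Lebesgue null, hence $\mathcal{L}$-measurable. Nothing beyond measurability of singletons is used, so the same reasoning would apply verbatim to any complete measure space whose one-point sets are measurable.
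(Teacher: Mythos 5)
Your proposal is correct and matches the paper's intent exactly: the corollary is stated without proof precisely because it is the direct specialization of the theorem that $M(X,L)$ is Baire to $X=\mathbb{R}^n$ with $\Sigma$ the Lebesgue $\sigma$-algebra, once the standing hypothesis of Section 2 is checked. Your verification that singletons are Lebesgue measurable (being null sets), hence $\Sigma$-separated, is the only point needing attention, and you handle it correctly.
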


$\bullet$ An another example is the {\it Baire algebra} over any topological space: the $\sigma$-algebra generated by the co-zero sets (or, equivalently, by the zero-sets).

\medskip

Recall that a topological space is {\it functionally Hausdorff}, if for any two points in it, there is a continuous function from the whole space  to the reals that takes the value $0$ at one point and $1$ at the other.

\begin{corollary}{\it Let $X$ be a functionally Hausdorff space and $K$ be a metrizable compact space. Then the space $Baire(X,K)$ of all $K$-valued Baire functions defined on $X$ is Baire.}
\end{corollary}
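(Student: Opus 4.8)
The plan is to realize $Baire(X,K)$ as an instance of the space $M(X,K)$ for a suitable measurable structure on $X$ and then invoke the theorem asserting that $M(X,L)$ is Baire for every metrizable compact $L$. Write $\mathrm{Ba}(X)$ for the Baire $\sigma$-algebra of $X$, i.e.\ the $\sigma$-algebra generated by the co-zero sets (equivalently, the zero-sets). By definition $\mathrm{Ba}(X)$ is a $\sigma$-algebra, so $(X,\mathrm{Ba}(X))$ is a measurable space, and a $K$-valued function is a Baire function precisely when it is $\mathrm{Ba}(X)$-measurable; hence $Baire(X,K)=M(X,K)$ as topological spaces (both carry the pointwise convergence topology). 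Thus the corollary will follow immediately from the preceding theorem once the standing hypothesis on $(X,\mathrm{Ba}(X))$ is checked, namely that for any two distinct points the corresponding singletons are $\mathrm{Ba}(X)$-separated.

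Verifying this separation property is the only substantive step, and it is exactly where functional Hausdorffness enters. Given $x\neq y$ in $X$, functional Hausdorffness supplies a continuous $f:X\to\mathbb{R}$ with $f(x)=0$ and $f(y)=1$. The set $C=\{z\in X: f(z)<1/2\}$ equals $\{z\in X: \max(0,\tfrac12-f(z))\neq 0\}$, so it is a co-zero set and therefore $C\in\mathrm{Ba}(X)$. Since $x\in C$ and $y\in X\setminus C$, the singletons $\{x\}$ and $\{y\}$ are $\mathrm{Ba}(X)$-separated. Hence $(X,\mathrm{Ba}(X))$ satisfies the standing assumption, and applying the theorem on $M(X,L)$ with $\Sigma=\mathrm{Ba}(X)$ and $L=K$ yields that $Baire(X,K)=M(X,K)$ is Baire.

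I expect no genuine obstacle beyond this separation check: the identification $Baire(X,K)=M(X,K)$ is a tautology once one unwinds the definition of a Baire function, and the topological Baire conclusion is then inherited verbatim from the main theorem. It is worth noting the contrast with the Borel corollary, where the $T_1$ axiom already suffices because each singleton is closed and thus Borel; for the Baire $\sigma$-algebra closedness of points is not enough, and one genuinely needs a continuous real-valued function separating the two points, which is precisely the functional Hausdorff assumption.
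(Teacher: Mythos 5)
Your proposal is correct and follows exactly the route the paper intends: the corollary is stated there without proof as an immediate consequence of the theorem on $M(X,L)$, applied to $\Sigma=\mathrm{Ba}(X)$, and your verification that functional Hausdorffness yields the standing $\Sigma$-separation assumption for singletons (via the co-zero set $\{z: f(z)<1/2\}$) is precisely the missing check. Your closing remark contrasting this with the $T_1$ hypothesis in the Borel case is also accurate.
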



\bibliographystyle{model1a-num-names}
\bibliography{<your-bib-database>}

\end{document}